\newcommand{\Q}{\mathbb{Q}}
\newcommand{\Z}{\mathbb{Z}}
\newcommand{\F}{\mathbb{F}}
\newcommand{\Gal}{{\rm Gal}}
\newcommand{\GL}{{\rm GL}}
\newcommand{\ord}{{\rm ord}}
\newtheorem{thm}{Theorem}
\newtheorem*{thmnonum}{Theorem}
\newtheorem{lem}[thm]{Lemma}
\newtheorem{cor}[thm]{Corollary}
\newtheorem*{ack}{Acknowledgements}
\newtheorem*{rem}{Remark}
\newcommand{\im}{{\rm im}~}
\newcommand{\Aut}{{\rm Aut}}
\newcommand{\End}{{\rm End}}
\begin{document}

\title[Arboreal images]{Uniform bounds on the image of the arboreal Galois representations attached to non-CM elliptic curves}
\author{Michael Cerchia}
\author{Jeremy Rouse}
\subjclass[2010]{Primary 11F80; Secondary 11G05, 12G05}
\begin{abstract}
  Let $\ell$ be a prime number and let $F$ be a number field and $E/F$ a non-CM elliptic curve with a
  point $\alpha \in E(F)$ of infinite order. Attached to the pair
  $(E,\alpha)$ is the $\ell$-adic arboreal Galois representation
  $\omega_{E,\alpha,\ell^{\infty}} : \Gal(\overline{F}/F) \to \Z_{\ell}^{2} \rtimes
  \GL_{2}(\Z_{\ell})$
  describing the action of $\Gal(\overline{F}/F)$ on points
  $\beta_{n}$ so that $\ell^{n} \beta_{n} = \alpha$. We give
  an explicit bound on the index of the image of $\omega_{E,\alpha,\ell^{\infty}}$
  depending on how $\ell$-divisible the point $\alpha$ is,
  and the image of the ordinary $\ell$-adic Galois representation.
  The image of $\omega_{E,\alpha,\ell^{\infty}}$ is connected with the density of primes
  $\mathfrak{p}$ for which $\alpha \in E(\F_{\mathfrak{p}})$ has order
  coprime to $\ell$.
\end{abstract}

\maketitle

\section{Introduction and Statement of Results}
\label{intro}

Let $F$ be a number field, $E/F$ an elliptic curve, and $\alpha \in
E(F)$ a point of infinite order. For each prime $\mathfrak{p}$ of $F$
of good reduction for $E$, $E(\mathbb{F}_{\mathfrak{p}})$ is a finite
abelian group and so $\alpha \in E(\mathbb{F}_{\mathfrak{p}})$ has
finite order.  It is natural to ask how often $\alpha$ has odd order
or even order (or more generally how often the order of $\alpha$ is
coprime to any fixed prime $\ell$). It seems reasonable to guess that
$\alpha$ has odd order ``half'' the time.  However, in
\cite{JonesRouse}, Rafe Jones and the second author determined that
for $E : y^{2} + y = x^{3} - x$ and $\alpha = (0,0)$, the density of
primes $p$ for which $\alpha \in E(\F_{p})$ has odd order is
$\frac{11}{21}$.

It is elementary to see that $\alpha \in E(\mathbb{F}_{\mathfrak{p}})$
has order coprime to $\ell$ if and only for all $n \geq 1$
there is some $\beta_{n} \in E(\mathbb{F}_{\mathfrak{p}})$ so that
$\ell^{n} \beta_{n} = \alpha$. This connects the order of
$\alpha \in E(\mathbb{F}_{\mathfrak{p}})$ with Galois-theoretic properties
of the preimages $\beta_{n}$ of $\alpha$ under multiplication by powers of $\ell$. These are governed by the arboreal Galois representation $\omega_{E,\alpha,\ell^{\infty}}$. To define this, we need some notation.

Let $E[\ell^{n}]$ denote the set of points of order dividing $\ell^{n}$ on $E$
and define $\rho_{E,\ell^{n}} : \Gal(F(E[\ell^{n}])/F) \to \Aut(E[\ell^{n}]) \cong \GL_{2}(\Z/\ell^{n} \Z)$
to be the usual mod $\ell^{n}$ Galois representation. Let $T_{n} = F(E[\ell^{n}])$
and $T_{\infty} = \bigcup_{n=1}^{\infty} F(E[\ell^{n}])$.

The representations $\rho_{E,\ell^{n}}$ are compatible, and if we let
$T_{\ell}(E) = \varprojlim E[\ell^{n}]$ be the $\ell$-adic Tate module
of $E$, we get a representation $\rho_{E,\ell^{\infty}} :
\Gal(T_{\infty}/F) \to \Aut(T_{\ell}(E))$. Fixing an isomorphism
$T_{\ell}(E) \to \Z_{\ell}^{2}$, we can view the image of $\rho$ as a
subgroup of $\GL_{2}(\Z_{\ell})$. Serre \cite{SerrePropriete} has
proven that if $E/F$ does not have complex multiplication, the image
of $\rho_{E,\ell^{\infty}}$ is a finite index subgroup of
$\GL_{2}(\Z_{\ell})$. Given an $\alpha \in E(F)$, fix a sequence of
points $\beta_{1}, \beta_{2}, \ldots$ so that $\ell \beta_{1} =
\alpha$ and $\ell \beta_{n} = \beta_{n-1}$ for $n \geq 1$. Let $K_{n}
= F(E[\ell^{n}],\beta_{n})$. For each $\sigma \in \Gal(K_{n}/F)$,
$\sigma(\beta_{n})$ is also a preimage of $\alpha$ under
multiplication by $\ell^{n}$ and so $\sigma(\beta_{n}) - \beta_{n} \in
E[\ell^{n}]$. Define $\omega_{E,\alpha,\ell^{n}} : \Gal(K_{n}/F) \to
E[\ell^{n}] \rtimes \Aut(E[\ell^{n}])$ by
$\omega_{E,\ell^{n},\alpha}(\sigma) = (\sigma(\beta_{n}) - \beta_{n},
\sigma|_{E[\ell^{n}]})$. These representations are again compatible
and if we let $K_{\infty} = \bigcup_{n=1}^{\infty} K_{n}$, they give
rise to $\omega_{E,\alpha,\ell^{\infty}} : \Gal(K_{\infty}/F) \to
T_{\ell}(E) \rtimes \Aut(T_{\ell}(E)) \cong \Z_{\ell}^{2} \rtimes
\GL_{2}(\Z_{\ell})$. Theorem~3.2 of \cite{JonesRouse} shows that the
density of primes $\mathfrak{p}$ for which $\alpha \in
E(\F_{\mathfrak{p}})$ has order coprime to $\ell$ only depends on the
image of $\omega_{E,\alpha,\ell^{\infty}}$, and Theorem 5.5 of
\cite{JonesRouse} shows that, in the case that
$\omega_{E,\alpha,\ell^{\infty}}$ is surjective, the density of primes
for which $\alpha$ has order coprime to $\ell$ is $\frac{\ell^{5} -
  \ell^{4} - \ell^{3} + \ell + 1}{\ell^{5} - \ell^{3} - \ell^{2} +
  1}$.

The goal of the present paper is to prove uniformity results about the
image of $\omega_{E,\alpha,\ell^{\infty}}$. In \cite{LT} (see
Definition 4.1) the authors declare a point $\alpha \in E(F)$ to be
\emph{strongly $\ell$-indivisible} if $\alpha+T \not\in \ell E(F)$ for
any torsion point $T \in E(F)$ of $\ell$-power order. This is a
natural primitivity condition to impose. Without this condition, the
index of the image of $\omega$ in $\Z_{\ell}^{2} \rtimes
\GL_{2}(\Z_{\ell})$ can be arbitrarily large, and the corresponding
density can be made very large (by taking $\alpha = \ell^{k} \gamma$
for some large $k$ and $\gamma \in E(F)$) or very small (by taking
$\alpha = \ell^{k} \gamma + T$ for $\gamma \in E(F)$ and $T \in
E[\ell](F)$). Moreover, if $\alpha + T = \ell \gamma$ for $\gamma \in
E(F)$ one can read off the representation
$\omega_{E,\alpha,\ell^{\infty}}$ from that of
$\omega_{E,\gamma,\ell^{\infty}}$ and the choice of $T$.

The action of Galois groups arising from division points of elliptic
curves has been studied extensively in the context of Kummer
theory. If we let $E$ be an elliptic curve over a number field $k$,
and $A$ a subgroup of $E(k)$ invariant under $\End_k(E)$, we have the
following consequential result of Ba\v{s}makov \cite{Bashmakov}: For a
prime $p$, denote by $A_p$ the group of elements $x\in
E(\overline{k})$ such that for some $n\geq 0$ $p^nx\in A$. Denote by
$L_p$ the smallest field containing $k$ over which all the elements of
$A_p$ are rational. Ba\v{s}makov proved that if $A$ is torsion free,
then for almost all $p$, $H^1(\Gal(L_p/k),A_p)=0$. This result was
generalized by Ribet in \cite{MR424823}, who proved that for abelian
varieties of CM-type, the Galois group arising from division of
rational points is as large as possible for all but finitely many
primes. A unified approach to results of this type was given by Ribet
in \cite{Ribet} with the results applying to mod $\ell$ Kummer theory
for an arbitrary commutative, connected algebraic group. There are
also $\ell$-adic versions of these results that are in the
literature. See for example, the paper of Bertrand \cite{Bertrand},
and Appendix 2 of the paper of Hindry \cite{Hindry}. The principal
contribution of the present paper is the application of results about
Kummer theory to give uniform bounds on the index of the image of the
Galois representation $\omega_{E,\alpha,\ell^{\infty}}$. Our main
result is the following.

\begin{thm}
\label{main}
Suppose that $F$ is a number field, $\ell$ is a prime,
and $E/F$ is an elliptic curve with ${\rm End}_{\overline{F}}(E) \cong \Z$. Denote $\im \rho_{E,\ell^{\infty}} = G$. Let $d$ be the largest positive integer for which
$\alpha = \ell^{d} \gamma + T$
for some $\gamma \in E(F)$ and some $F$-rational $\ell$-power torsion point $T$. Then the index of $\im \omega_{E,\alpha,\ell^{\infty}}$
in $\Z_{\ell}^{2} \rtimes \GL_{2}(\Z_{\ell})$ is at most
$\ell^{2d+2r+s} \cdot |\GL_{2}(\Z_{\ell}) : G|$, where
$r$ is the smallest positive integer so that $G$ contains
a matrix $\begin{bmatrix} x & 0 \\ 0 & x \end{bmatrix}$
with $\ord_{\ell}(x-1) = r$, and $s$ is the largest positive integer
such that there is a degree $\ell^{s}$ cyclic isogeny $E \to E'$ defined over $F$.
\end{thm}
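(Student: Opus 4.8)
The plan is to factor the index through the ``Kummer part'' of the representation, reduce to the strongly $\ell$-indivisible case, and then bound the resulting cokernel by two independent mechanisms, one governed by $r$ and one by $s$.

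First, write $H = \im \omega_{E,\alpha,\ell^{\infty}}$ and let $V = H \cap \Z_{\ell}^{2}$ be the image of $\Gal(K_{\infty}/T_{\infty})$ under $\sigma \mapsto \sigma(\beta_{n}) - \beta_{n}$. Since the second coordinate of $\omega$ is $\sigma|_{E[\ell^{n}]}$, the group $H$ surjects onto $G$ with kernel $V$, so the elementary index formula for the filtration $\Z_{\ell}^{2} \unlhd \Z_{\ell}^{2} \rtimes \GL_{2}(\Z_{\ell})$ gives
\[
  |\Z_{\ell}^{2} \rtimes \GL_{2}(\Z_{\ell}) : H| = |\GL_{2}(\Z_{\ell}) : G| \cdot |\Z_{\ell}^{2} : V|,
\]
and it suffices to prove $|\Z_{\ell}^{2} : V| \le \ell^{2d+2r+s}$. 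Next I would strip off the divisibility and torsion: writing $\alpha = \ell^{d}\gamma + T$ with $d$ maximal, the point $\beta_{n}$ and a division point $\delta_{n-d}$ of $\gamma$ differ by an $\ell$-power torsion point, which lies in $T_{\infty}$ and is fixed by $\Gal(K_{\infty}/T_{\infty})$. A short computation in $T_{\ell}(E)$ then identifies the Kummer module $V_{\alpha}$ of $\alpha$ with $\ell^{d}V_{\gamma}$, so $|\Z_{\ell}^{2}:V_{\alpha}| = \ell^{2d}\,|\Z_{\ell}^{2}:V_{\gamma}|$, while maximality of $d$ forces $\gamma$ to be strongly $\ell$-indivisible. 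The problem reduces to showing $|\Z_{\ell}^{2}:V_{\gamma}| \le \ell^{2r+s}$.

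Now the heart. Set $M = T_{\ell}(E) \cong \Z_{\ell}^{2}$ and $V = V_{\gamma}$. Because $K_{\infty}/F$ and $T_{\infty}/F$ are Galois, $V$ is a $\Z_{\ell}[G]$-submodule of $M$; since $\End_{\overline{F}}(E) \cong \Z$, the module $M \otimes \Q_{\ell}$ is an irreducible $G$-representation, so every nonzero $G$-submodule has finite index, and $V \ne 0$ as $\gamma$ has infinite order. Write $C = M/V$, a finite $\Z_{\ell}[G]$-module needing at most two generators over $\Z_{\ell}$. I would bound $|C| = |C/\ell^{r}C|\cdot|\ell^{r}C|$. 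The matrix $xI \in G$ with $\ord_{\ell}(x-1) = r$ makes $x-1$ equal to $\ell^{r}$ times a unit, so $\ell^{r}C = (x-1)C$, and since $C$ is two-generated, $|C/\ell^{r}C| \le \ell^{2r}$. For the remaining factor I would prove that $\ell^{r}C = (x-1)C$ is a \emph{cyclic} $G$-module; granting this, it is a cyclic $G$-quotient of $M$ of some order $\ell^{t}$, and dualizing through the Weil pairing (which gives $M \cong \Hom(M,\Z_{\ell}(1))$ as $G$-modules) converts it into a Galois-stable cyclic subgroup of $E[\ell^{t}]$ of order $\ell^{t}$, i.e.\ an $F$-rational cyclic $\ell^{t}$-isogeny $E \to E'$. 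Hence $t \le s$, so $|\ell^{r}C| \le \ell^{s}$ and $|C| \le \ell^{2r+s}$.

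The main obstacle is the cyclicity of $(x-1)C$, equivalently the claim that the smaller invariant factor of $C$ is annihilated by $\ell^{r}$. This is where the central homothety and the arithmetic of $\gamma$ must be combined: the standard fact that a central element of $G$ acts as the identity on $H^{*}(G,-)$ shows that $(x-1)$, and therefore $\ell^{r}$, annihilates $H^{1}(G,E[\ell^{n}])$ for every $n$, while strong $\ell$-indivisibility ensures, via the Kummer theory of Ba\v{s}makov and Ribet, that the cokernel of the Kummer map is measured by precisely such an $H^{1}$. Making this comparison uniform in $n$, and checking that the residual non-cyclic part genuinely originates from an $F$-rational cyclic isogeny, is the step I expect to require the most care.
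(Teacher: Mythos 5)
Your proposal is correct in substance, reaches exactly the paper's bound, and rests on the same two pillars as the paper's proof (Sah's lemma applied to $H^{1}(\Gal(T_{\infty}/F),E[\ell^{m}])$ to control the ``depth'' of the Kummer image, and the conversion of a Galois-stable cyclic subquotient of the Tate module into an $F$-rational cyclic isogeny to invoke $s$), but it packages the argument differently. Where you differ: you factor the index as $[\Z_{\ell}^{2}\rtimes\GL_{2}(\Z_{\ell}):H]=[\GL_{2}(\Z_{\ell}):G]\cdot[\Z_{\ell}^{2}:V]$ at the outset, which is cleaner than the paper's closing argument (the paper instead shows $\im\omega$ is the full preimage of a finite-level image and extracts the index there); you strip off $d$ via $V_{\alpha}=\ell^{d}V_{\gamma}$ and reduce to the strongly $\ell$-indivisible case, while the paper carries $d$ through (proving $\beta_{m}\notin E(T_{\infty})$ for $m>r+d$); and you phrase the $s$-bound as an invariant-factor statement about the cokernel $C=M/V$, whereas the paper bounds the index of the $G$-span $S$ of a primitive vector in $V$ --- the same computation. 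Two remarks. First, the Weil-pairing dualization is an unneeded detour: if $M/W$ is cyclic of order $\ell^{t}$ then $W\supseteq\ell^{t}M$ and $W/\ell^{t}M$ is already a Galois-stable cyclic subgroup of $E[\ell^{t}]$ of order $\ell^{t}$, which Proposition III.4.12 of Silverman converts directly into the isogeny (and the Tate twist appearing in the dual would otherwise need a sentence to dismiss). Second, the step you defer --- cyclicity of $\ell^{r}C$, equivalently $V_{\gamma}\not\subseteq\ell^{r+1}M$ --- is precisely the paper's opening move, and your named ingredients do suffice with no uniform-in-$n$ comparison of cohomology groups: if $V_{\gamma}\subseteq\ell^{r+1}M$ then $\beta_{r+1}\in E(T_{\infty})$, so $\xi(\sigma)=\sigma(\beta_{r+1})-\beta_{r+1}$ is a cocycle on $\Gal(T_{\infty}/F)$ with values in $E[\ell^{r+1}]$; Sah's lemma for the central element $xI$ shows $\ell^{r}\xi$ is a coboundary $\sigma(T')-T'$, so $Q=\ell^{r}\beta_{r+1}-T'\in E(F)$ and $\gamma=\ell Q+\ell T'$ with $\ell T'\in E(F)$ of $\ell$-power order, contradicting strong $\ell$-indivisibility. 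With that single level ($m=r+1$ after your reduction) filled in, your proof is complete.
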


\begin{rem}
  The paper \cite{LT}, by Lombardo and Tronto which was finished at nearly the same time as this present paper, contains a similar result to the one above. In particular, Theorem 4.15 gives a bound on the index of the image of
  $\omega_{E,\alpha,\ell^{\infty}}|_{\Gal(K_{\infty}/T_{\infty})}$ in $\Z_{\ell}^{2}$ in terms
  of the parameter $n_{\ell}$ (the smallest positive integer so that the image
  of $\rho_{E,\ell^{\infty}}$ contains all matrices $\equiv I \pmod{\ell^{n_{\ell}}}$.
  The bound given in Theorem 4.15 is $2d + 4n_{\ell}$, while the bound we give
  is $\leq 2d + 3n_{\ell}$. 
\end{rem}

The bound in Theorem~\ref{main} is sharp in some but not all cases.
In general, if one fixes $G \subseteq \GL_{2}(\Z_{\ell})$, it is not
straightforward to generate examples of elliptic curves $E/F$ with
$\im \rho_{E,\ell^{\infty}} = G$ and with the image of
$\omega_{E,\ell^{\infty}}$ unusually small.

In \cite{REU2015}, the
authors show that if $F = \Q$, $\ell = 2$, $\rho_{E,2^{\infty}}$ is
surjective, and $\alpha$ is strongly $2$-indivisible, then
$\omega_{E,\alpha,2^{\infty}}$ is either surjective (in which case the
density of primes $p$ for which $\alpha$ has odd order is $11/21$), or
the image of $\omega_{E,\alpha,2^{\infty}}$ has index $4$ in
$\Z_{2}^{\times} \rtimes \GL_{2}(\Z_{2})$, and the odd order density
is $179/336$. In the context of the main theorem, $d = 0$, $r = 1$, $s
= 0$ and $|\GL_{2}(\Z_{2}) : G| = 1$ and so the bound given by the
main theorem is $4$.

In \cite{LiangRouse}, it is shown that if the
image of $\rho_{E,2^{\infty}} = \left\{ \begin{bmatrix} a & b \\ 0 &
  d \end{bmatrix} : a, d \in \Z_{2}^{\times}, b \in \Z_{2} \right\}$,
and $\alpha$ is strongly $2$-indivisible, then there are $63$
possibilities for the image of $\omega_{E,\alpha,2^{\infty}}$ up to
conjugacy in $\Z_{2}^{2} \rtimes \GL_{2}(\Z_{2})$ and the index
is at most $8$. This corresponds to $d = 0$, $r = 1$ and $s = 1$ in
the main theorem and so the main theorem gives a sharp result in this case.

If $E : y^{2} = x^{3} - 343x + 2401$ and $\alpha =
(0,-49)$, then $\alpha$ is strongly $2$-indivisible (and so $d =
0$). The image $G$ of $\rho_{E,2^{\infty}}$ is an index $4$ subgroup
of $\GL_{2}(\Z_{2})$ and corresponds to the modular curve $X_{2a}$ of
\cite{RZB}. The curve $E$ has no cyclic isogenies defined over $\Q$
and so $s = 0$, while $G$ contains $5I$ but does not contain $mI$ for
any $m \equiv 3 \pmod{4}$.  Thus, $r = 2$. One can explicitly compute
that if $\beta_{2}$ is a point so that $4 \beta_{2} = \alpha$, then
$\beta_{2} \in E(\Q(E[8]))$ and this implies that the image of
$\omega_{E,\alpha,2^{\infty}}$ has index $64$ in $\Z_{2}^{2} \rtimes
\GL_{2}(\Z_{2})$. We have that $64 = 2^{2d+2r+s} \cdot
|\GL_{2}(\Z_{2}) : G|$, so the bound is sharp in this case.

If $E/\Q$ is an elliptic with $2$-adic image equal to $X_{238a}$ of
\cite{RZB}, and $\alpha \in E(\Q)$ which is strongly $2$-indivisible,
the parameters from the main theorem are $d = 0$, $r = 4$, $s = 1$ and
$|\GL_{2}(\Z_{2}) : G| = 96$. Thus, the bound given is $2^{2 \cdot 4 +
  1} \cdot 96 = 49152$. As will be explained later, in this case, the
index of the image of $\omega$ inside $\Z_{2}^{2} \rtimes
\GL_{2}(\Z_{2})$ can be at most $3072$ and in this instance the main theorem
is not sharp.

Finally, for an elliptic curve $E/\Q$ with $2$-adic image equal to
$X_{243g}$ of \cite{RZB} and strongly $2$-indivisible $\alpha$,
the parameters in the main theorem are $d = 0$, $r = 3$, $s = 4$
and $|\GL_{2}(\Z_{2}) : G| = 96$. Thus, the bound given is
$2^{2 \cdot 3 + 4} \cdot 96 = 98304$. The authors are not aware of any reason
the index of the image of $\omega$ in $\Z_{2}^{2} \rtimes \GL_{2}(\Z_{2})$
cannot be this large, but we have not found an example
of curve and a point $\alpha$ for which this bound is achieved. 

The parameters $r$ and $s$ in the theorem above depend only on
the structure of $T_{\ell}(E)$ as a Galois module, and hence only on $G$.
We can strengthen the above result using known results about the
image of $\rho_{E,\ell^{\infty}}$.
\begin{thmnonum}
  If $m$ is a fixed positive integer, then as $E$ ranges over all non-CM
  elliptic curves $E/F$, where $[F : \Q] = m$, there are only finitely many
  possibilities for the image of $\rho_{E,\ell^{\infty}}$ in $\GL_{2}(\Z_{\ell})$.
\end{thmnonum}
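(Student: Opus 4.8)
The plan is to deduce the statement from a uniform bound on the index of the image. The key group-theoretic fact is that $\GL_2(\Z_\ell)$ is a topologically finitely generated profinite group, and so for each $N$ it possesses only finitely many open subgroups of index at most $N$: an open subgroup of index $n \le N$ is the stabilizer of a point for a transitive continuous action on $n$ letters, hence is determined by a continuous homomorphism $\GL_2(\Z_\ell) \to S_n$, of which there are only finitely many, since such a homomorphism is determined by the images of a fixed finite topological generating set. In particular there are only finitely many $\GL_2(\Z_\ell)$-conjugacy classes of open subgroups of index at most $N$. Since the image of $\rho_{E,\ell^\infty}$ is only well defined up to conjugacy, it therefore suffices---and, as subgroups of distinct index are distinct, is in fact equivalent---to show that the index $|\GL_2(\Z_\ell) : \im \rho_{E,\ell^\infty}|$ is bounded by a constant depending only on $m$ and $\ell$.

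To organize this bound I would factor through the determinant. Because $\det : \GL_2(\Z_\ell) \to \Z_\ell^\times$ is surjective with kernel $\SL_2(\Z_\ell)$, writing $G = \im \rho_{E,\ell^\infty}$ we have
\[
|\GL_2(\Z_\ell) : G| = [\Z_\ell^\times : \det G]\cdot[\SL_2(\Z_\ell) : G \cap \SL_2(\Z_\ell)].
\]
Here $\det \circ \rho_{E,\ell^\infty}$ is the $\ell$-adic cyclotomic character $\chi_\ell$, whose image is $\Gal(F(\mu_{\ell^\infty})/F)$; its index in $\Z_\ell^\times \cong \Gal(\Q(\mu_{\ell^\infty})/\Q)$ equals $[\Q(\mu_{\ell^\infty}) \cap F : \Q] \le [F : \Q] = m$. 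Thus the first factor is at most $m$ uniformly over all number fields of degree $m$, and the entire problem reduces to bounding the second factor, namely the index of the image inside $\SL_2(\Z_\ell)$.

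This last bound is the crux, and it is exactly a uniform open image theorem. When $\ell \ge 5$ and $\im \rho_{E,\ell}$ contains $\SL_2(\F_\ell)$, Serre's lifting lemma---any closed subgroup of $\SL_2(\Z_\ell)$ surjecting onto $\SL_2(\F_\ell)$ is all of $\SL_2(\Z_\ell)$ for $\ell \ge 5$---forces $\SL_2(\Z_\ell) \subseteq G$, so the second factor is $1$. The difficulty lies entirely in the complementary cases (small $\ell$, and those in which $\im \rho_{E,\ell}$ lies in a Borel subgroup, the normalizer of a split or non-split Cartan, or an exceptional subgroup), where the index inside $\SL_2(\Z_\ell)$ can be large and must be controlled uniformly as $F$ ranges over the infinitely many number fields of degree $m$. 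I would obtain this from the uniform open image theorem in the style of Cadoret and Tamagawa, applied to the universal elliptic curve over the $j$-line $Y(1)_\Q$: any non-CM $E/F$ with $[F:\Q] = m$ corresponds, up to a quadratic twist (which alters the $\ell$-adic image only by the scalars $\pm I$, hence the index only by a bounded factor), to a fiber over a closed point of $Y(1)_\Q$ of degree at most $m$, and the theorem---in the form valid for points of bounded degree---bounds the index of the $\ell$-adic image over all such points by a constant $B = B(m,\ell)$. The main obstacle is precisely supplying this deep uniform input over all degree-$m$ fields at once; its role is to guarantee that the image is the full preimage of its reduction in the finite group $\GL_2(\Z/\ell^{B}\Z)$, at which point the finiteness established in the first paragraph completes the proof.
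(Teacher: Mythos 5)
Your argument is essentially the route the paper itself takes: the paper gives no proof of this theorem, instead citing Clark--Pollack (Theorem 2.3(a), resting on Serre, Abramovich, and Frey) and the Cadoret--Tamagawa uniform open image theorem, which is exactly the deep input you invoke, and your preliminary reductions (finitely many open subgroups of bounded index in a topologically finitely generated profinite group, plus the determinant/cyclotomic computation) are correct. The only points you gloss over are minor and repairable: there is no universal elliptic curve over $Y(1)_{\Q}$ itself (your quadratic-twist remark handles this for $j \neq 0, 1728$, and those $j$-invariants are CM, hence excluded), and Cadoret--Tamagawa bounds the image of Galois over the residue field $\Q(j_E)$ rather than over $F$, which costs at most a further factor of $[F : \Q(j_E)] \leq m$ in the index.
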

A complete proof of this theorem can be found in \cite{ClarkPollack} (see Theorem 2.3(a)), relying on work of Serre \cite{SerrePropriete}, Abramovich \cite{Abramovich}, and Frey \cite{Frey}. A more general statement is the main theorem of \cite{CadoretTamagawa}.

The above result immediately yields the following corollary.
\begin{cor}
\label{uniform}
Fix a positive integer $m$ and a prime $\ell$. Then there is a constant $C(m,\ell)$ with the
following property.  For all number fields $F$ with $[F : \Q] = m$ and
for all pairs of non-CM curves and points $E/F$ and $\alpha \in E(F)$
which are strongly $\ell$-indivisible, the image of $\omega_{E,\alpha,\ell^{\infty}}$ in $\Z_{\ell}^{2} \rtimes \GL_{2}(\Z_{\ell})$ has index $\leq C(m,\ell)$.
\end{cor}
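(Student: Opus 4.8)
The plan is to deduce Corollary~\ref{uniform} by feeding the quoted finiteness theorem into the index bound of Theorem~\ref{main}. The first step is to observe that the strong $\ell$-indivisibility hypothesis forces the parameter $d$ of Theorem~\ref{main} to vanish. Indeed, if $d \geq 1$ then one could write $\alpha = \ell^{d}\gamma + T = \ell(\ell^{d-1}\gamma) + T$ for some $\gamma \in E(F)$ and some $F$-rational $\ell$-power torsion point $T$, so that $\alpha + (-T) \in \ell E(F)$ with $-T$ again an $F$-rational torsion point of $\ell$-power order; this contradicts the definition of strong $\ell$-indivisibility. Hence $d = 0$, and Theorem~\ref{main} bounds the index of $\im \omega_{E,\alpha,\ell^{\infty}}$ in $\Z_{\ell}^{2} \rtimes \GL_{2}(\Z_{\ell})$ by $\ell^{2r+s} \cdot |\GL_{2}(\Z_{\ell}) : G|$, where $G = \im \rho_{E,\ell^{\infty}}$.

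Next I would invoke the finiteness theorem stated just above: with $m$ and $\ell$ fixed, as $F$ ranges over number fields of degree $m$ and $E/F$ over non-CM elliptic curves, the image $G = \im \rho_{E,\ell^{\infty}}$ takes only finitely many values, up to conjugacy in $\GL_{2}(\Z_{\ell})$. The essential point is then that each of the three quantities in the bound depends only on the conjugacy class of $G$: the index $|\GL_{2}(\Z_{\ell}):G|$ is visibly conjugacy-invariant; the integer $r$ is defined purely in terms of the scalar matrices contained in $G$; and, as the excerpt records, $s$ depends only on the structure of $T_{\ell}(E)$ as a Galois module, hence only on $G$. Each of $r$ and $s$ is moreover finite: since $E$ is non-CM, Serre's theorem makes $G$ open in $\GL_{2}(\Z_{\ell})$, so $G$ contains every matrix congruent to $I$ modulo $\ell^{N}$ for some $N$ (whence $r \leq N$), and $s$ is finite because $E$ admits only finitely many cyclic isogenies defined over $F$.

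Finally, writing $\mathcal{S}$ for the finite set of conjugacy classes of possible images arising from degree-$m$ fields, I would set
\[
C(m,\ell) = \max_{G \in \mathcal{S}} \ \ell^{2r(G)+s(G)} \cdot |\GL_{2}(\Z_{\ell}) : G|,
\]
which is a maximum of finitely many finite numbers and therefore finite; crucially, $\mathcal{S}$ and hence $C(m,\ell)$ depend only on $m$ and $\ell$, not on the individual $F$, $E$, or $\alpha$. For any such $F$ and any strongly $\ell$-indivisible pair $(E,\alpha)$ we have shown $d = 0$, so Theorem~\ref{main} gives an index bound of $\ell^{2r+s}\cdot|\GL_{2}(\Z_{\ell}):G| \leq C(m,\ell)$, as required. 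I expect no serious obstacle in this argument: all of the substantive content is already packaged into Theorem~\ref{main} and into the quoted finiteness theorem, and the only points demanding care are the reduction to $d = 0$ and the verification that $r$, $s$, and the index are functions of the conjugacy class of $G$ alone.
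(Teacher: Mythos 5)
Your proposal is correct and follows essentially the same route as the paper, which likewise derives the corollary immediately by combining Theorem~\ref{main} with the finiteness theorem, using the observation (stated explicitly in the paper) that $r$ and $s$ depend only on $G$. Your added details --- the verification that strong $\ell$-indivisibility forces $d=0$ and that the bound is conjugacy-invariant --- are exactly the steps the paper leaves implicit, and they check out.
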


We prove Theorem~\ref{main} by showing that the fields $F(\beta_{n})$ and $T_{n} = F(E[\ell^{n}])$ are ``approximately disjoint''. The failure of disjointness of these
fields is related to the exponent of the cohomology group $H^{1}(T_{m}/F, E[\ell^{n}])$. The order of this cohomology group can be shown to be uniformly
bounded (as a function of $m$ and $n$) in terms of the image of $\rho_{E,\ell^{\infty}}$.

\begin{ack}
  This work represents joint work done when the first author was a master's student at Wake Forest University. The authors wish to thank the anonymous referee
  for many helpful comments that improved the paper.
\end{ack}

\section{Proof of Theorem~\ref{main}}

Let $G$ be a topological group and let $M$ be a right topological
$G$-module. Define as usual the group $H^{1}(G,M)$ of continuous $1$-cocycles
$\xi : G \to M$ modulo $1$-coboundaries. We will use the following
Lemma of Sah (see \cite{Lang}, page 212, for a proof).

\begin{lem}
\label{sah}
  If $\alpha \in Z(G)$, then the endomorphism $(\alpha - 1)$ of $H^{1}(G,M)$
  is the zero map.
\end{lem}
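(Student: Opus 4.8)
The plan is to prove the statement directly at the level of $1$-cocycles, by exhibiting, for an arbitrary continuous cocycle $\xi$, an explicit element of $M$ whose coboundary represents $(\alpha-1)\xi$. First I would fix conventions. Since $M$ is a right $G$-module, a continuous $1$-cocycle is a continuous map $\xi \colon G \to M$ satisfying $\xi(gh) = \xi(g)\cdot h + \xi(h)$; a $1$-coboundary is a map of the form $g \mapsto m\cdot g - m$ for some fixed $m \in M$; and the endomorphism $(\alpha - 1)$ of $H^{1}(G,M)$ is the map induced by the endomorphism $m \mapsto m\cdot\alpha - m$ of $M$. It is worth remarking at the outset that $m \mapsto m\cdot\alpha - m$ really is a homomorphism of $G$-modules precisely because $\alpha$ is central: the required identity $(m\cdot g)\cdot\alpha - m\cdot g = (m\cdot\alpha - m)\cdot g$ amounts to $g\alpha = \alpha g$ acting on $M$, so centrality is exactly what lets the endomorphism descend to a well-defined map on cohomology in the first place.

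The key computation would use the centrality relation $g\alpha = \alpha g$ together with the cocycle identity applied in the two possible orders. Expanding $\xi(g\alpha)$ gives $\xi(g)\cdot\alpha + \xi(\alpha)$, while expanding $\xi(\alpha g)$ gives $\xi(\alpha)\cdot g + \xi(g)$; since $g\alpha = \alpha g$, these two expressions are equal. Rearranging then yields
\[
\xi(g)\cdot\alpha - \xi(g) = \xi(\alpha)\cdot g - \xi(\alpha)
\]
for every $g \in G$. The left-hand side is precisely $\bigl((\alpha-1)\xi\bigr)(g)$, and the right-hand side is the value at $g$ of the coboundary of the single fixed element $\xi(\alpha) \in M$. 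Hence $(\alpha-1)\xi$ is a coboundary, its class in $H^{1}(G,M)$ vanishes, and since $\xi$ was arbitrary the endomorphism $(\alpha-1)$ is the zero map.

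I expect no serious obstacle here: once the bookkeeping is in place, the argument is a one-line manipulation, and the essential mechanism is simply that centrality allows us to commute $\alpha$ past $g$ and thereby trade the ``twist by $\alpha$'' for a coboundary. The only points requiring care are keeping the module-side conventions consistent (right versus left action, and the resulting order in the cocycle and coboundary formulas) and confirming that the element $\xi(\alpha)$ used to build the coboundary is a genuine, well-defined element of $M$ with continuous coboundary; both are immediate, the latter because $\xi$ is continuous and the coboundary of a fixed element is automatically a continuous cochain.
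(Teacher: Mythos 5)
Your proof is correct, and it is precisely the standard argument for Sah's lemma: the paper itself gives no proof but defers to Lang (page 212), where the same computation --- equating the two cocycle expansions of $\xi(g\alpha)=\xi(\alpha g)$ to exhibit $(\alpha-1)\xi$ as the coboundary of $\xi(\alpha)$ --- appears. Your remark that centrality is also what makes $m\mapsto m\cdot\alpha-m$ a $G$-module endomorphism (so that $(\alpha-1)$ is well defined on $H^{1}$ at all) is a worthwhile point of care that the citation glosses over.
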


Let $F$ be a number field, $E/F$ an elliptic curve and $\alpha \in
E(F)$ be a non-torsion point. Let $d$ be the smallest positive integer
so that $\alpha = \ell^{d} \gamma + T$ for some $\gamma \in E(F)$ and
$T$ an $F$-rational $\ell$-power torsion point.  Define a sequence of
points $\beta_{1}, \beta_{2}, \ldots$ so that $\ell \beta_{1} =
\alpha$ and $\ell \beta_{n} = \beta_{n-1}$.  As in
Section~\ref{intro}, define $T_{n} = F(E[\ell^{n}])$, $T_{\infty} =
\bigcup_{n=1}^{\infty} T_{n}$, $K_{n} = F(E[\ell^{n}],\beta_{n})$ and
$K_{\infty} = \bigcup_{n=1}^{\infty} K_{n}$ and fix an isomorphism
$T_{\ell}(E) \to \Z_{\ell}^{2}$. Let $\omega_{E,\alpha,\ell^{\infty}} : \Gal(K_{\infty}/F) \to
\Z_{\ell}^{2} \rtimes GL_{2}(\Z_{\ell})$ be the arboreal Galois representation
and $\kappa_{E,\alpha,\ell^{\infty}} : \Gal(K_{\infty}/T_{\infty}) \to
T_{\ell}(E)$ be the $\ell$-adic Kummer map, the first coordinate of
$\omega_{E,\alpha,\ell^{\infty}}$, namely $\kappa(\sigma) =
(\sigma(\beta_{1}) - \beta_{1}, \sigma(\beta_{2})-\beta_{2},\ldots)
\in \underset{i}{\varprojlim} E[\ell^{i}] = T_{\ell}(E)$. In
\cite{Bertrand}, Bertrand shows that the image of $\kappa$ has finite
index in $T_{\ell}(E)$ using similar cohomological arguments to those
of Ribet in \cite{Ribet}.

\begin{proof}[Proof of Theorem~\ref{main}]
  We fix $G = \im \rho_{E,\ell^{\infty}} \subseteq \GL_{2}(\Z_{\ell})$ and
  let $r$ and $s$ be as defined in the statement of Theorem~\ref{main}.
  We show that if $m > r + d$, then $\beta_{m} \not\in E(T_{\infty})$.
  Suppose to the contrary that $\beta_{m} \in E(T_{\infty})$ and define
  $\xi : \Gal(T_{\infty}/F) \to E[\ell^{m}]$ by $\xi(\sigma) = \sigma(\beta_{m}) - \beta_{m}$. This is a $1$-cocycle and gives rise to an element of
  $H^{1}(T_{\infty}/F, E[\ell^{m}])$. If $g = \begin{bmatrix} x & 0 \\ 0 & x \end{bmatrix} \in G$ with $\ord_{\ell}(x-1) = r$, then $g-I$ kills every element
  of $H^{1}(T_{\infty}/F, E[\ell^{m}])$. This is the same as multiplication by
  $x-1$. Since $H^{1}(T_{\infty}/F, E[\ell^{m}])$ is an $\ell$-group, it follows
  that $\ell^{r} \xi$ is a 1-coboundary. Thus, there is some $T \in E[\ell^{m}]$
  so that $\ell^{r} \sigma(\beta_{m}) - \ell^{r} \beta_{m} = \sigma(T) - T$
  for all $\sigma \in \Gal(T_{\infty}/F)$. This implies that
  $Q = \ell^{r} \beta_{m} - T \in E(F)$, which leads to a contradiction
  because $\alpha = \ell^{m-r} Q + \ell^{m-r} T$, and
  $\alpha \in E(F)$ and $Q \in E(F)$ implies that $\ell^{m-r} T \in E(F)$
  is an $\ell$-power torsion point.

It follows that there is some $\sigma \in \Gal(K_{\infty}/T_{\infty})$
so that $\sigma(\beta_{r+d+1}) \ne \beta_{r+d+1}$. Consider
$\vec{v} = \kappa(\sigma) \in \Z_{\ell}^{2}$ as a row vector. The reduction of $\vec{v}$ in $(\Z/\ell^{r+d+1} \Z)^{2}$ is nonzero and therefore, the minimal $\ell$-adic valuation of the coordinates is at most $r+d$. Write $\vec{v} = \ell^{t} \vec{p}$, where $\vec{p}$ is a primitive vector (one where not both entries are $\equiv 0 \pmod{\ell}$) and $t \leq r+d$.
Since the image of
$\kappa \subseteq \Z_{\ell}^{2}$ has a natural action of $G = \im \rho_{E,\ell^{\infty}}$ on it, the image of $\kappa$ contains $\ell^{t} \vec{p} g$ for all $g \in G$. Let
$S \subseteq \Z_{\ell}^{2}$ be the smallest subgroup containing $\vec{p} g$
for all $g \in G$. Since $G$ is open in $\GL_{2}(\Z_{\ell})$, there is a positive
integer $k$ so that $G$ contains all $g \equiv I \pmod{\ell^{k}}$. A straightforward calculation shows that for any $\vec{q} \equiv \vec{p} \pmod{\ell^{k}}$
there exists some $g \equiv I \pmod{\ell^{k}}$ so that $g \vec{p} = \vec{q}$.
Let $k' \geq k$ be the smallest non-negative integer so that $S$ contains
all vectors $\equiv \vec{0} \pmod{\ell^{k'}}$ and define
$\overline{S} = \{ \vec{v} \bmod \ell^{k'} : \vec{v} \in S \}$.
Identify $\overline{S}$ with a subgroup of $E[\ell^{k'}]$. Note that $\overline{S}$ must be cyclic (because it does not contain all vectors $\equiv 0 \pmod{\ell^{k'-1}}$ by the minimality of $k'$), and it must also have order $\ell^{k'}$
since it contains a primitive vector and is cyclic. Finally, $\overline{S}$
is stable under the action of $\Gal(T_{\infty}/F)$. From Proposition III.4.12 and Remark III.4.13.2 of \cite{Silverman} it follows that there is an $F$-rational cyclic isogeny $\phi : E \to E'$ for some elliptic curve $E'/F$ whose kernel is $\overline{S}$. We have then that
\[
|\Z_{\ell}^{2} : S| = |(\Z/\ell^{k'} \Z)^{2} : \overline{S}| = |\overline{S}| = \ell^{k'}
\]
is less than or equal to the maximum degree of a cyclic $\ell$-power isogeny $\phi : E \to E'$ defined over $F$, namely $\ell^{s}$.  Since the image of $\kappa$ contains $\ell^{r+d} \cdot S$, it follows that the index of the image of $\kappa$ is $\leq \ell^{2d+2r+s}$ and moreover that the image of $\kappa$
contains all $\vec{v} \equiv 0 \pmod{\ell^{d+r+s}}$.

Next, we show that the image of $\omega_{E,\alpha,\ell^{\infty}}$ has finite index
in $\Z_{\ell}^{2} \rtimes \GL_{2}(\Z_{\ell})$. Let $n$ be the number of elements
in the set $\{ \vec{y} \bmod \ell^{d+r+s} : (\vec{y},g) \in \im \omega \text{ for some } g \in \GL_{2}(\Z_{\ell}) \}$, and let $\vec{y}_{1}, \ldots, \vec{y}_{n}$
  be elements in $\Z_{\ell}^{2}$ that reduce mod $\ell^{d+r+s}$ to the elements
  of that set. For any $(\vec{y},g) \in \im \omega$, there is some
  $\vec{y}_{i}$ so that $\vec{y} \equiv \vec{y}_{i} \pmod{\ell^{d+r+s}}$. Consequently, $(\vec{y}_{i},g) = (\vec{y},g) * (\vec{y}_{i} - \vec{y}, I)$ is
  in the image of $\omega_{E,\alpha,\ell^{\infty}}$. 

  Let $H = \{ g \in \GL_{2}(\Z_{\ell}) : (\vec{0},g) \in \im \omega \}$. We will prove that $H$ is a finite index subgroup of $\GL_{2}(\Z_{\ell})$. Define $m = \max \{k, d+r+s \}$ and let $\Gamma(\ell^{m}) = \{ g \in \GL_{2}(\Z_{\ell}) : g \equiv I \pmod{\ell^{m}} \}$. In particular, we will show that if $g_{1} \equiv g_{2} \pmod{\ell^{m}}$
  and $(\vec{y}_{i}, g_{1})$ and $(\vec{y}_{i}, g_{2})$ are both in
  the image of $\omega$, then $g_{1}$ and $g_{2}$ are in the same left coset
  of $H \cap \Gamma(\ell^{m})$. Letting $\vec{x} = -\vec{y}_{i} (-g_{1}^{-1} g_{2} + I)$ we have that $\vec{x} \equiv 0 \pmod{\ell^{m}}$ and hence $(\vec{x},I) \in \im \omega$. Therefore
\[
(\vec{y}_{i}, g_{1})^{-1} * (\vec{y}_{i}, g_{2}) * (\vec{x}, I)
= (\vec{y}_{i} (-g_{1}^{-1} g_{2} + I), g_{1}^{-1} g_{2}) * (-\vec{y}_{i} (-g_{1}^{-1} g_{2} + I), I) = (\vec{0}, g_{1}^{-1} g_{2}) \in \im \omega
\]
and thus $g_{1}^{-1} g_{2} \in H \cap \Gamma(\ell^{m})$. Since every
$g \in \Gamma(\ell^{m})$ has the property that $(\vec{y}_{i},g) \in
\im \omega$ for some $i$, it follows that $[\Gamma(\ell^{m}) : H \cap
  \Gamma(\ell^{m})] \leq n$ and so $[\GL_{2}(\Z_{\ell}) : H] \leq
    [\GL_{2}(\Z_{\ell}) : H \cap \Gamma(\ell^{m})] \leq
    [\GL_{2}(\Z_{\ell}) : \Gamma(\ell^{m})] [\Gamma(\ell^{m}) : H \cap
      \Gamma(\ell^{m})] \leq n \cdot |\GL_{2}(\Z/\ell^{m} \Z)|$ is
    finite. It follows that $H$ is an open subgroup of
    $\GL_{2}(\Z_{\ell})$ and hence it contains a neighborhood of the
    identity matrix. So there is some integer $v \geq m$ so that every
    $g \equiv I \pmod{\ell^{v}}$ is contained in $H$. Finally, if
    $\vec{x} \equiv 0 \pmod{\ell^{v}}$ and $g \equiv I
    \pmod{\ell^{v}}$ then $(\vec{x},I)$ and $(\vec{0},g)$ are both in
    the image of $\omega$ and so the image of $\omega$ contains
    $(\vec{0},h) * (\vec{x},I) = (\vec{x},h)$ and hence all
    $(\vec{x},h)$ with $\vec{x} \equiv 0 \pmod{\ell^{v}}$ and $h
    \equiv I \pmod{\ell^{v}}$. This implies that the image of $\omega$
    is the full preimage in $\Z_{\ell}^{2} \rtimes \GL_{2}(\Z_{\ell})$
    of the image of $\omega_{E,\alpha,\ell^{v}}$.  The map $\kappa_{v}
    : \Gal(K_{v}/T_{v}) \to E[\ell^{v}]$ given by $\kappa_{v}(\sigma)
    = \sigma(\beta_{v}) - \beta_{v}$ has an image that contains the
    mod $\ell^{v}$ reduction of the image of $\kappa$, and
    $\Gal(T_{v}/F)$ has index $|\GL_{2}(\Z_{\ell}) : G|$ inside
    $\GL_{2}(\Z/\ell^{v} \Z)$. It follows from this that
\[
|\Z_{\ell}^{2} \rtimes \GL_{2}(\Z_{\ell}) : \im \omega| \leq
\ell^{2d+2r+s} |\GL_{2}(\Z_{\ell}) : G|, 
\]
as desired.
\end{proof}

Regarding the example of elliptic curves with $2$-adic image equal
to $X_{238a}$ of \cite{RZB}, the image of $\rho_{E,2^{\infty}}$
contains $\begin{bmatrix} 17 & 0 \\ 0 & 17 \end{bmatrix}$
but it does not contain any $\begin{bmatrix} x & 0 \\ 0 & x \end{bmatrix}$
with $\ord_{2}(x-1) < 4$. The bound on the exponent of $H^{1}(T_{\infty}/F,E[2^{m}])$ given by Sah's lemma is $16$. However, one can use Magma \cite{Magma}
to compute that $H^{1}(\Gal(\Q(E[2^{8}])/\Q),E[8])$ has exponent $4$ and
use the inflation-restriction sequence to show that
\[
  H^{1}(\Gal(\Q(E[2^{n}])/\Q),E[8]) \cong H^{1}(\Gal(\Q(E[2^{8}])/\Q),E[8])
\]
for $n \geq 8$. Therefore the main theorem does not give a sharp bound
(because Sah's lemma does not give an optimal bound).

\bibliographystyle{plain}
\bibliography{references}

\end{document}